\theoremstyle{plain} 
\newtheorem{theorem}{Theorem}[section]
\newtheorem{corollary}[theorem]{Corollary}
\newtheorem{lemma}[theorem]
{Lemma}
\newtheorem{proposition}[theorem]{Proposition}
\theoremstyle{definition} 
\theoremstyle{definition} 
\newtheorem*{ex*}{Example}
\theoremstyle{remark} 
\theoremstyle{remark} 
\newtheorem*{remark*}{Remark}
\newcommand{\beqa}{\begin{eqnarray}}
\newcommand{\eeqa}{\end{eqnarray}}
\newcommand{\bseq}{\begin{subequations}}
\newcommand{\eseq}{\end{subequations}}
\newcommand{\dd}{\partial}
\renewcommand{\dd}{{\,\operatorname{d}}}
\newcommand{\Ga}{\Gamma}
\newcommand{\la}{\lambda}
\renewcommand{\th}{\theta}
\renewcommand{\Psi}{\overline{\Phi}}
\newcommand{\ii}[1]{\,\mathbf{I}\{#1\}} 
\newcommand{\pd}[2]{\frac{\partial#1}{\partial#2}} 
\newcommand{\fd}[2]{\frac{\dd#1}{\dd#2}}
\newcommand{\E}{\operatorname{\mathsf{E}}}
\newcommand{\R}{\mathbb{R}}
\renewcommand{\d}{\mathrm{d}}
\renewcommand{\le}{\leqslant}
\begin{document}

\begin{frontmatter}

\title{Monotone tail and moment ratio properties of Student's family of distributions}
\runtitle{Monotonicity properties of Student's family}

%

\begin{aug}
\author{\fnms{Iosif} \snm{Pinelis}\thanksref{t2}\ead[label=e1]{ipinelis@mtu.edu}}
  \thankstext{t2}{Supported in part by NSF grant DMS-0805946 and NSA grant H98230-12-1-0237}
\runauthor{Iosif Pinelis}


\address{Department of Mathematical Sciences\\
Michigan Technological University\\
Houghton, Michigan 49931, USA\\
E-mail: \printead[ipinelis@mtu.edu]{e1}}
\end{aug}

\begin{abstract}
Let $G_p$ denote the tail function of Student's distribution with $p$ degrees of freedom. 
It is shown that the ratio $G_q(x)/G_p(x)$ is decreasing in $x>0$ for any $p$ and $q$ such that $0<p<q\le\infty$. 
Therefore, $G_q(x)<G_p(x)$ for all such $p$ and $q$ and all $x>0$. 
Corollaries on the monotonicity of (generalized) moments and ratios thereof are also given. 
\end{abstract}

  
%

\begin{keyword}[class=AMS]
\kwd[Primary ]{60E15}
\kwd[; secondary ]{62E15}
\kwd{62F03}
\end{keyword}


\begin{keyword}
\kwd{Student's distribution}
\kwd{stochastic monotonicity}
\kwd{monotone tail ratio}
\kwd{probability inequalities}
\end{keyword}

\end{frontmatter}

\settocdepth{chapter}


\settocdepth{subsubsection}

\theoremstyle{plain} 


%
%
%
\section{Summary and discussion}\label{intro} 
%
The density and tail functions of Student's distribution with $p$ degrees of freedom are given, respectively, by the formulas 
\begin{align}
	f_p(x)&:=\frac{ \Gamma
   \left(\frac{p+1}{2}\right)}{\sqrt{\pi p}\, \Gamma \left(\frac{p}{2}\right)}\,
   \left(1+\frac{x^2}{p}\right)^{-(p+1)/2} \quad\text{and} \label{eq:f_p}\\
   G_p(x)&:=\int_x^\infty f_p(u)\dd u. \notag 
\end{align}
for all real $x$. 
Most often, the values of the parameter $p$ are assumed to be positive integers. 
However, formula \eqref{eq:f_p} defines a probability density function for all real $p>0$, and, as we shall see, it may be advantageous, at least as far as proofs are concerned, to let $p$ take on all positive real values.  
Let us also extend these definitions by continuity to $p=\infty$, so that $f_\infty$ and $G_\infty$ are the density and tail functions of the standard normal distribution. 


Let $I$ be an interval on the real line. A family $(f_\th)_{\th\in I}$ of (say everywhere strictly positive) probability density functions is said to have a monotone likelihood ratio (MLR) if, for any $\th_0$ and $\th_1$ in $I$ such that $\th_0<\th_1$, the ratio $f_{\th_1}/f_{\th_0}$ is (say strictly) increasing on $\R$. 
Let $(G_\th)_{\th\in I}$ denote the family of the corresponding tail functions, so that $G_\th(x):=\int_x^\infty f_\th(u)\dd u$ for all $\th\in I$ and $x\in\R$. 
It is well known that the MLR property implies the stochastic monotonicity (SM): $G_{\th_1}(x)>G_{\th_0}(x)$ for all real $x$ and all $\th_0$ and $\th_1$ in $I$ such that $\th_0<\th_1$. 
In fact, one can say more. 
Introduce also the monotone tail ratio (MTR) property, meaning that, for any $\th_0$ and $\th_1$ in $I$ such that $\th_0<\th_1$, the ratio $G_{\th_1}/G_{\th_0}$ is strictly increasing on $\R$. 
Then one has 
\begin{equation*}
	\text{MLR}\implies\text{MTR}\implies\text{SM};  
\end{equation*}
the ``non-strict'' version of these implications was given in 
\cite{keilson-sumita}. 
Stochastic monotonicity is important in deriving uniformly most powerful tests; see e.g.\ \cite{karlin-rubin-AnnMathStat56}. 
Generally, SM is derived based on MLR, but not not necessarily via MTR.  

It is not hard to see that the Student family of densities does not have the MLR property; see \eqref{eq:fq/fp} below. 
However, we shall show here that this family (strictly speaking, parameterized by $-p$ rather than by $p$) still has the MTR and hence SM properties; 
a key here is a general l'Hospital-type rule for monotonicity; see e.g.\ \cite{monthly,pin06} and further references there. 

\begin{theorem}\label{th:}
For any $p$ and $q$ such that $0<p<q\le\infty$, the tail ratio 
\begin{equation}\label{eq:decr}
\frac{G_q(x)}{G_p(x)}	\ \text{is (strictly) decreasing in $x\in[0,\infty)$}, 
\end{equation}
which implies the (strict) stochastic majorization:  
\begin{equation}\label{eq:less}
	G_q(x)<G_p(x)	\ \text{for all $x>0$}. 
\end{equation}
\end{theorem}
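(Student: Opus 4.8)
The plan is to apply a general, l'Hospital-type rule for monotonicity (as in \cite{monthly,pin06}) to the ratio $r:=G_q/G_p$ on $(0,\infty)$. Writing $r(x)=\dfrac{G_q(x)-G_q(\infty)}{G_p(x)-G_p(\infty)}$ with $G_p(\infty)=G_q(\infty)=0$, the relevant ``derivative ratio'' is
\[
\rho(x):=\frac{G_q'(x)}{G_p'(x)}=\frac{f_q(x)}{f_p(x)}.
\]
A one-line logarithmic differentiation of \eqref{eq:f_p} gives, for $0<p<q<\infty$,
\[
\big(\ln\rho\big)'(x)=x\Big(\frac{p+1}{p+x^2}-\frac{q+1}{q+x^2}\Big)=\frac{(q-p)\,x\,(1-x^2)}{(p+x^2)(q+x^2)},
\]
and $\big(\ln\rho\big)'(x)=\dfrac{x(1-x^2)}{p+x^2}$ in the limiting case $q=\infty$. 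Hence in every case $\rho$ is strictly increasing on $(0,1)$ and strictly decreasing on $(1,\infty)$: it is unimodal, with a single maximum at $x=1$. (This also exhibits the failure of the MLR property for the Student family, so the classical l'Hospital monotonicity rule --- which requires $\rho$ itself monotone --- does not apply here, and the unimodal refinement is needed.)

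The engine of the argument is the elementary identity
\[
r'=\frac{f_p}{G_p}\,(r-\rho)=-\frac{f_p}{G_p}\,\delta,\qquad \delta:=\rho-r,
\]
valid on $(0,\infty)$, with $f_p,G_p>0$. Thus $\operatorname{sign}r'=-\operatorname{sign}\delta$, and at any $\bar x$ with $\delta(\bar x)=0$ one has $r'(\bar x)=0$, hence $\delta'(\bar x)=\rho'(\bar x)$, which is $>0$ for $\bar x<1$ and $<0$ for $\bar x>1$. Consequently $\delta$ cannot change from positive to negative values within $(0,1)$, nor from negative to positive values within $(1,\infty)$: a first-zero argument then shows that $\delta>0$ near $0$ forces $\delta>0$ on all of $(0,1)$, and $\delta>0$ near $\infty$ forces $\delta>0$ on all of $(1,\infty)$, with $\delta(1)\ge0$ by continuity. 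Granting the two endpoint estimates below, this gives $\delta\ge0$ on $(0,\infty)$ with strict inequality off $x=1$, hence $r'\le0$ there with strict inequality off $x=1$, i.e.\ $r$ is strictly decreasing on $[0,\infty)$; and \eqref{eq:less} follows at once, since then $r(x)<r(0)=G_q(0)/G_p(0)=1$ for $x>0$.

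For the endpoint behaviour, near $x=0$ I use that by symmetry $G_p(0)=G_q(0)=\tfrac12$, so $r(0)=1$, while $\rho(0)=f_q(0)/f_p(0)$; since $f_p(0)=\big(2\sqrt p\int_0^{\pi/2}\sin^{p-1}\th\,d\th\big)^{-1}$ is increasing in $p$ --- a classical monotonicity property of the Wallis/gamma ratio --- one gets $\rho(0)>1=r(0)$, i.e.\ $\delta(0)>0$. Near $x=\infty$, the substitution $u=xt$ gives
\[
\frac{G_p(x)}{f_p(x)}=\int_x^\infty\Big(\frac{p+x^2}{p+u^2}\Big)^{(p+1)/2}\!du=x\int_1^\infty\Big(\frac{1+p/x^2}{t^2+p/x^2}\Big)^{(p+1)/2}\!dt\ \sim\ \frac xp
\]
as $x\to\infty$ (by dominated convergence), while the corresponding quantity for $q$ is $\sim x/q$ if $q<\infty$ and $\sim 1/x$ if $q=\infty$; since $\delta>0\iff G_p/f_p>G_q/f_q$, and $x/p$ dominates in all cases, this yields $\delta(x)>0$ for all large $x$.

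The main obstacle will not be the sign-chase --- which is short once the unimodality of $\rho$ is recorded --- but making the two endpoint inputs fully rigorous: the monotonicity of $f_p(0)$ in $p$ (equivalently, the monotonicity of the Wallis-type integral $\sqrt p\int_0^{\pi/2}\sin^{p-1}\th\,d\th$, provable via log-convexity of $\Gamma$ or a direct estimate) and the dominated-convergence justification of the asymptotics $G_p(x)/f_p(x)\sim x/p$; a secondary point is to invoke the l'Hospital-type monotonicity rule in precisely the ``unimodal $\rho$'' form used here.
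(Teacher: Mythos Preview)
Your argument is correct and follows the same strategy as the paper: exploit the unimodality of the derivative ratio (increasing on $(0,1)$, decreasing on $(1,\infty)$) together with the key endpoint input that $f_p(0)$ is strictly increasing in $p$, and feed both into an l'Hospital-type monotonicity rule. The paper packages this slightly differently. Rather than work with $G_q/G_p$ for fixed $p<q$, it differentiates in $p$: setting $r(x)=\partial_p\ln G_p(x)$ and $\rho(x)=\partial_p\ln f_p(x)$, one gets $\rho'(x)=x(1-x^2)/(p+x^2)^2$, and the endpoint input $r'(0)=-2\,\partial_p f_p(0)<0$ is exactly the Lemma (your ``Wallis/gamma'' monotonicity, which the paper proves via the integral representation of the digamma function). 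The paper then invokes \cite[Proposition~4.3]{pin06} for the unimodal-$\rho$ case and integrates back over $p$ to obtain the decrease of $\ln(G_q/G_p)$. Your direct treatment of $G_q/G_p$ avoids the differentiate-then-integrate detour, and your sign-chase on $\delta=\rho-r$ is precisely the content of that cited proposition, made explicit. One simplification you can make: since $G_q(\infty)=G_p(\infty)=0$, the Cauchy mean-value theorem gives, for each $x$, some $\xi>x$ with $r(x)=\rho(\xi)$; hence for $x>1$ the decrease of $\rho$ on $(1,\infty)$ yields $\delta(x)=\rho(x)-\rho(\xi)>0$ immediately, so your tail asymptotics $G_p(x)/f_p(x)\sim x/p$ (while correct) are not needed.
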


Note that \eqref{eq:decr} can be rewritten as follows: 
\begin{equation*}
\frac{G_p(v)}{G_p(u)}	\ \text{is decreasing in $p\in(0,\infty]$ whenever $0\le u<v<\infty$;}  
\end{equation*}
this can also be expressed, for instance, as the statement that the function $(0,\infty]\times(-\infty,0]\ni(p,y)\mapsto G_p(-y)$ is strictly totally positive of order $2$ (STP${}_2$); see e.g.\ \cite{karlin_total-pos}. 

Let us now present some corollaries of Theorem~\ref{th:} concerning (possibly generalized) moments and ratios of moments. 

For any $p\in(0,\infty]$, let $T_p$ denote any random variable (r.v.) which has Student's distribution with $p$ degrees of freedom. In particular, $T_\infty$ will have the standard normal distribution. 

Let $b\colon[0,\infty)\to[0,\infty)$ be a nondecreasing function, which is non-constant on $(0,\infty)$. 


\begin{corollary}\label{cor:1} \ 
\begin{enumerate}[(i)]
	\item The (generalized) $b$-moment $\E b(|T_p|)$ of the r.v.\ $|T_p|$ is 
	nonincreasing in $p\in(0,\infty]$, and so, 
	the ``finiteness'' set 	
\begin{equation}
F_b:=\{p\in(0,\infty]\colon\E b(|T_p|)<\infty\}	
\end{equation}
of the $b$-moment 
is an interval of the form $[p_b,\infty)$ or $(p_b,\infty)$, for some $p_b\in(0,\infty]$. 
Moreover, $\E b(|T_p|)$ is (strictly) decreasing in $p\in[p_b,\infty)$. 
	\item In particular,  for any given $s\in(0,\infty)$, the moment $\E|T_p|^s$ is decreasing in $p\in[s,\infty]$ (of course, $\E|T_s|^s=\infty$). 
\end{enumerate}
\end{corollary}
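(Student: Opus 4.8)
The goal is to prove Corollary~\ref{cor:1}, whose substance is the monotonicity of the generalized moment $p\mapsto\E b(|T_p|)$, together with the finiteness-set and strict-decrease refinements. The strategy is to reduce everything to a single integration-by-parts identity that expresses $\E b(|T_p|)$ directly in terms of the tail function $G_p$, and then to feed Theorem~\ref{th:} into that identity. First I would write, for the nonnegative r.v.\ $|T_p|$ with $b$ nondecreasing and (without loss of generality, after a shift) $b(0)=0$,
\begin{equation*}
\E b(|T_p|)=\int_{[0,\infty)}\P(|T_p|\ge x)\,\d b(x)
=\int_{[0,\infty)}2G_p(x)\,\d b(x),
\end{equation*}
using that $\P(|T_p|\ge x)=2G_p(x)$ for $x\ge0$ by symmetry and that $b$ induces a nonnegative Lebesgue--Stieltjes measure $\d b$ on $[0,\infty)$. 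This is the key representation: it converts a statement about moments of $|T_p|$ into an integral of the tail functions $G_p$ against a fixed positive measure $\d b$ that does not depend on $p$.

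Once this identity is in hand, part~(i) is essentially immediate from Theorem~\ref{th:}. Monotonicity follows because $G_q(x)\le G_p(x)$ pointwise for $0<p<q\le\infty$ and $x>0$ (this is \eqref{eq:less}), so integrating the inequality $2G_q(x)\le2G_p(x)$ against the nonnegative measure $\d b$ gives $\E b(|T_q|)\le\E b(|T_p|)$; hence $p\mapsto\E b(|T_p|)$ is nonincreasing on $(0,\infty]$. The finiteness set $F_b$ is then automatically an upper set: if $p\in F_b$ and $q>p$, then $\E b(|T_q|)\le\E b(|T_p|)<\infty$, so $q\in F_b$; an upper subset of $(0,\infty]$ is an interval of the form $[p_b,\infty)$ or $(p_b,\infty)$ with $p_b:=\inf F_b$. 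For the strict-decrease claim on $[p_b,\infty)$, I would use that $b$ is non-constant on $(0,\infty)$, so $\d b$ assigns positive mass to some set of positive $x$-values; on that set the strict inequality $G_q(x)<G_p(x)$ from \eqref{eq:less} holds, and integrating a strict pointwise inequality against a measure that charges that set yields the strict inequality $\E b(|T_q|)<\E b(|T_p|)$ whenever both are finite, i.e.\ for $p_b\le p<q\le\infty$.

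Part~(ii) is the special case $b(x)=x^s$, which is nondecreasing and non-constant on $(0,\infty)$ for $s\in(0,\infty)$; here $\d b(x)=s\,x^{s-1}\,\d x$. The only genuine arithmetic point is to identify the threshold $p_b$ in this case as exactly $s$. This follows from the classical fact that $\E|T_p|^s<\infty$ iff $s<p$ (the density $f_p$ decays like $|x|^{-(p+1)}$, so $x^s f_p(x)$ is integrable at infinity precisely when $s-p-1<-1$, i.e.\ $s<p$), with the boundary case $\E|T_s|^s=\infty$; plugging $p_b=s$ into part~(i) gives strict decrease on $[s,\infty]$.

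The main obstacle, such as it is, is not conceptual but a matter of care at the two boundaries. First, the layer-cake identity above requires justification when some moments are infinite and when $b$ has atoms or fails absolute continuity; I would state it as a Lebesgue--Stieltjes/Tonelli computation valid for all nonnegative nondecreasing $b$ (interpreting both sides in $[0,\infty]$), so that the monotonicity and finiteness-set conclusions hold without integrability hypotheses, and the strict-decrease conclusion is invoked only once finiteness at $p$ is known. Second, the strictness argument must genuinely use that $\d b$ charges a set where the strict tail inequality is available; since $b$ is non-constant on $(0,\infty)$ there is an interval $[a,a']\subset(0,\infty)$ with $b(a')>b(a)$, hence $\d b([a,a'])>0$, and on $[a,a']\subset(0,\infty)$ we do have $G_q<G_p$ by \eqref{eq:less}, closing the gap cleanly.
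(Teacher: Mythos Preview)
Your proposal is correct and follows essentially the same route as the paper: both proofs use the layer-cake/Fubini identity $\E b(|T_p|)=2\int_{[0,\infty)}G_p(x)\,\mu_b(\d x)$ and then apply the pointwise tail inequality \eqref{eq:less} from Theorem~\ref{th:}, with the non-constancy of $b$ ensuring $\mu_b\big((0,\infty)\big)>0$ for the strict decrease. The only cosmetic difference is that you first normalize to $b(0)=0$, whereas the paper's measure $\mu_b$ absorbs a possible atom at $0$ (harmless, since $G_p(0)=\tfrac12$ for all $p$); and the paper spells out the case $F_b=(p_b,\infty)$ by noting $\E b(|T_{p_b}|)=\infty$, which you handle implicitly via the $[0,\infty]$-valued interpretation.
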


Note that the ``finiteness'' set $F_b$ can actually be of either form: $[p_b,\infty)$ or $(p_b,\infty)$. E.g., if $b(x)=x^s$ for any $s\in(0,\infty)$ and all $x\in[0,\infty)$ then $F_b=(s,\infty)$; and if $b(x)=x^s/\ln^2(e^{2/s}+x)$ for any $s\in(0,\infty)$ and all $x\in[0,\infty)$ then $F_b=[s,\infty)$. 

Let now $a\colon[0,\infty)\to[0,\infty)$ and $b\colon[0,\infty)\to[0,\infty)$ be nonzero nondecreasing right-continuous functions; we say that a function is nonzero on a given set if it is not identically zero on that set. Let $\mu_a$ and $\mu_b$ be the corresponding Lebesgue--Stieltjes measures on $[0,\infty)$, defined by the conditions $\mu_a([0,x])=a(x)$ and $\mu_b([0,x])=b(x)$ for all $x\in[0,\infty)$. 
Suppose that the measure $\mu_b$ is absolutely continuous with respect to $\mu_a$, with a density 
\begin{equation*}
	\rho:=\rho_{a,b}:=\frac{\d b}{\d a}. 
\end{equation*}
In particular, if the functions $a$ and $b$ are continuous on $[0,\infty)$ and continuously differentiable on $(0,\infty)$ with 
the derivative $a'>0$ on $(0,\infty)$, then one can take $\rho=b'/a'$ on $(0,\infty)$; if at that $a(0)\ne0$ then $\rho(0)=b(0)/a(0)$. 

Given a (nonnegative Borel) measure $\mu$ on $[0,\infty)$, a Borel set $E\subseteq[0,\infty)$, 
and a real-valued Borel function $h$, defined on a subset of $[0,\infty)$ containing $E$,
let us say that $h$ is $\mu$-constant on $E$ if $\mu\big(\{x\in E\colon h(x)\ne \ell\}\big)=0$ for some $\ell\in\R$. 
If $h$ is not $\mu$-constant on the domain of definition of $h$, let us just say that $h$ is not $\mu$-constant. 


\begin{corollary}\label{cor:2}\ 
\begin{enumerate}[(i)]
	\item If $\rho$ is nondecreasing on $[0,\infty)$ then the ratio $\dfrac{\E b(|T_p|)}{\E a(|T_p|)}$ is nonincreasing in $p\in F_a\cap F_b$; moreover, this ratio is decreasing in $p\in F_a\cap F_b$ if $\rho$ is not $\mu_a$-constant on $[0,\infty)$.  
	\item In particular, the ratio $\dfrac{\E|T_p|^t}{\E|T_p|^s}$ is decreasing in $p\in(t,\infty]$ for any given real numbers $s$ and $t$ such that $0<s<t$. 
\end{enumerate}
\end{corollary}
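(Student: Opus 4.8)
The plan is to deduce part~(ii) from part~(i) and to prove part~(i) by the standard device of writing each generalized moment as an integral of a tail function against the relevant Lebesgue--Stieltjes measure, then applying a Chebyshev-type correlation inequality whose monotonicity input is supplied by Theorem~\ref{th:}. First I would record the layer-cake representation: for any nonzero nondecreasing right-continuous $a\colon[0,\infty)\to[0,\infty)$ and any $p\in(0,\infty]$,
\begin{equation*}
	\E a(|T_p|)=a(0)+\int_{(0,\infty)}\P(|T_p|\ge x)\,\mu_a(\d x)=a(0)+2\int_{(0,\infty)}G_p(x)\,\mu_a(\d x),
\end{equation*}
using the symmetry of Student's distribution, and similarly for $b$ with $\mu_b=\rho\,\mu_a$. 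Thus, on $F_a\cap F_b$,
\begin{equation*}
	\frac{\E b(|T_p|)}{\E a(|T_p|)}=\frac{b(0)+2\int_{(0,\infty)}\rho(x)\,G_p(x)\,\mu_a(\d x)}{a(0)+2\int_{(0,\infty)}G_p(x)\,\mu_a(\d x)}.
\end{equation*}

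Next I would fix $p<q$ in $F_a\cap F_b$ and show the $q$-ratio does not exceed the $p$-ratio. The key is that, by Theorem~\ref{th:}, the function $w(x):=G_q(x)/G_p(x)$ is nonincreasing on $[0,\infty)$, and $G_q(x)\,\mu_a(\d x)=w(x)\,\bigl(G_p(x)\,\mu_a(\d x)\bigr)$. So, writing $\nu_p(\d x):=2G_p(x)\,\mu_a(\d x)$ on $(0,\infty)$ and augmenting it with an atom to absorb the constant terms (i.e.\ working with the measure on $[0,\infty)$ that puts mass $a(0)$ at $0$ — where we set $w(0)$ to be $\lim_{x\downarrow0}w(x)$, consistently since $b(0)/a(0)\le\rho(0^+)$ need not hold in general, so one treats the constant term by a small separate argument or by assuming $a(0)=b(0)=0$ after a harmless reduction), the claim reduces to: if $\rho$ is nondecreasing and $w$ is nonincreasing, then
\begin{equation*}
	\frac{\int \rho\,w\,\d\nu}{\int w\,\d\nu}\le\frac{\int \rho\,\d\nu}{\int\d\nu},
\end{equation*}
which is exactly the Chebyshev correlation inequality for the oppositely monotone pair $(\rho,w)$ with respect to the finite measure $\nu=\nu_p$ (after normalizing to a probability measure). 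Strict inequality follows when $\rho$ is not $\mu_a$-constant, because then $\rho$ is not $\nu_p$-constant (as $G_p>0$ on $(0,\infty)$), and $w$ is strictly decreasing on $[0,\infty)$ by Theorem~\ref{th:}, so the two functions are genuinely non-comonotone on a set of positive $\nu_p$-measure, making the Chebyshev inequality strict; this also shows the ratio is strictly decreasing in $p$, not merely at the endpoints. Part~(ii) is the special case $a(x)=x^s$, $b(x)=x^t$, for which $\rho(x)=\tfrac{t}{s}\,x^{\,t-s}$ is increasing and non-constant, $F_a=(s,\infty]\supseteq(t,\infty]$, $F_b=(t,\infty]$, so the hypotheses of~(i) hold on $(t,\infty]$ and the conclusion is strict decrease there.

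The main obstacle I anticipate is bookkeeping at the boundary point $x=0$ and the possible atoms/constant terms $a(0),b(0)$: the clean Chebyshev step applies to the measures $G_p(x)\,\mu_a(\d x)$ on $(0,\infty)$, whereas $\E a(|T_p|)$ and $\E b(|T_p|)$ carry the extra constants $a(0)$ and $b(0)$, and there is no monotonicity relation forcing $b(0)/a(0)$ into the range of $\rho$. The resolution is to note $b$ being non-$\mu_a$-constant forces $\mu_a((0,\infty))>0$, reduce to the case $a(0)=b(0)=0$ by the substitution $a\mapsto a-a(0)$, $b\mapsto b-b(0)$ (which changes neither $\rho$ on $(0,\infty)$ nor the monotonicity conclusions, and only shifts numerator and denominator by the fixed constants $b(0),a(0)$ — one then checks directly that adding the same positive constant to numerator and a possibly different positive constant to the denominator of a decreasing-in-$p$ ratio of the form $(b(0)+N_p)/(a(0)+D_p)$ preserves the non-strict monotonicity, and the strict version is recovered from strictness of $N_q/D_q<N_p/D_p$ together with $N_p,D_p>0$). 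A secondary point is justifying the layer-cake identity and the finiteness of all integrals on $F_a\cap F_b$, which is routine from Tonelli's theorem and the definition of $F_a,F_b$. Everything else is a direct invocation of Theorem~\ref{th:} and the classical correlation inequality.
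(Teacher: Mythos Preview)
Your approach is the paper's: express $\tfrac12\,\E a(|T_p|)=\int_{[0,\infty)}G_p\,\d\mu_a$ and $\tfrac12\,\E b(|T_p|)=\int_{[0,\infty)}G_p\,\rho\,\d\mu_a$, then apply the Chebyshev correlation inequality to the oppositely monotone pair $\rho$ and $w:=G_q/G_p$ against the measure $G_p\,\d\mu_a$. The paper packages this as the single identity
\[
\Big(\frac{\E b(|T_q|)}{\E a(|T_q|)}-\frac{\E b(|T_p|)}{\E a(|T_p|)}\Big)\E a(|T_p|)\,\E a(|T_q|)
=2\iint[\rho(v)-\rho(u)]\Big[\frac{G_q(v)}{G_p(v)}-\frac{G_q(u)}{G_p(u)}\Big]G_p(u)G_p(v)\,\mu_a(\d u)\,\mu_a(\d v),
\]
with the integrals over $[0,\infty)$, and reads off the sign from Theorem~\ref{th:}; strictness comes from the $\mu$-constant lemma. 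Part~(ii) is the special case $a(x)=x^s$, $b(x)=x^t$, exactly as you say.

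Where your write-up goes wrong is the ``obstacle'' at $x=0$. There is none. Integrate over the closed ray $[0,\infty)$: the atom $\mu_a(\{0\})=a(0)$ carries weight $G_p(0)\,a(0)=a(0)/2$, the tail ratio extends by $w(0)=G_q(0)/G_p(0)=1$, and when $a(0)>0$ the Radon--Nikodym density at $0$ is $\rho(0)=b(0)/a(0)$ by definition. Your worry that ``$b(0)/a(0)\le\rho(0^+)$ need not hold'' is precisely the statement $\rho(0)\le\rho(0^+)$, which is part of the \emph{hypothesis} that $\rho$ is nondecreasing on $[0,\infty)$. So the Chebyshev step applies on all of $[0,\infty)$ with no surgery.

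Your fallback --- replace $a,b$ by $a-a(0),\,b-b(0)$ and then ``check directly'' that adding back the constants $b(0),a(0)$ to numerator and denominator preserves monotonicity --- is stated as a generality that is false: from $N_1/D_1>N_2/D_2$ with $N_i,D_i>0$ decreasing one cannot conclude $(b_0+N_1)/(a_0+D_1)\ge(b_0+N_2)/(a_0+D_2)$ for arbitrary $a_0,b_0\ge0$ (e.g.\ $N_1=2,D_1=2,N_2=\tfrac12,D_2=1,b_0=10,a_0=0$). The step \emph{can} be rescued here using the extra inequality $b_0/a_0=\rho(0)\le N_p/D_p$ (which follows from $\rho(0)\le\rho$ on $(0,\infty)$), but once you invoke that you are effectively just including the atom at $0$ in the Chebyshev measure --- so do that from the start, as the paper does, and the whole detour disappears.
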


Note that the generalized moment $\E a(|T_p|)$ in the denominator of the ratio in part (i) of Corollary~\ref{cor:2} is (strictly) positive, since the function $a$ was assumed to be nonzero, nonnegative, and nondecreasing on $[0,\infty)$.

As usual, let $x_+:=0\vee x$
, and let $\la$ stand for the Lebesgue measure. 


One also has the following proposition, based on the MLR property stated in \eqref{eq:fq/fp}. The conventions on the functions $a$ and $b$ made before Corollaries~\ref{cor:1} and \ref{cor:2} will not necessarily apply in what follows. 

\begin{proposition}\label{prop:1}\ 
\begin{enumerate}[(i)]
	\item 
Let $b\colon[0,1]\to[0,\infty)$ be a nondecreasing function, which is not $\la$-constant. 
Then the conditional expectation 
$\E\big(b(1-|T_p|)\,\big|\,|T_p|<1\big)$ is 
decreasing in $p\in(0,\infty]$.  
	\item 
Let $b\colon[0,\infty)\to[0,\infty)$ be a bounded nondecreasing function, which is not $\la$-constant. 
Then 
$\E\big(b(|T_p|-1)\,\big|\,|T_p|>1\big)$ is 
decreasing in $p\in(0,\infty]$. 
	\item 
Let $a\colon[0,1]\to[0,\infty)$ be a bounded Borel function with $a(0)=0$.  
Let $r\colon[0,1]\to[0,\infty)$ be a nondecreasing function, which is not $\la$-constant on the set $\{t\in[0,1]\colon a(t)\ne0\}$.   
Let then $b:=ra$.  
Then the ratio $\dfrac{\E b\big((1-|T_p|)_+\big)}{\E a\big((1-|T_p|)_+\big)}$ is decreasing in $p\in(0,\infty]$.   
	\item 
Let $a\colon[0,\infty)\to[0,\infty)$ be a bounded Borel function with $a(0)=0$.    
Let $r\colon[0,\infty)\to[0,\infty)$ be a bounded nondecreasing function, which is not $\la$-constant on the set 
$\{t\in[0,\infty)\colon a(t)\ne0\}$.   
Let then $b:=ra$. 
Then the ratio $\dfrac{\E b\big((|T_p|-1)_+\big)}{\E a\big((|T_p|-1)_+\big)}$ 
is decreasing in $p\in(0,\infty]$. 
\end{enumerate}
\end{proposition}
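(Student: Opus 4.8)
The plan is to derive all four parts from a single application of Chebyshev's correlation inequality, the only analytic input being the \emph{local} monotone likelihood ratio property recorded in \eqref{eq:fq/fp}. First I would make precise what that input gives. For $0<p<q\le\infty$ put $\ell_{p,q}:=f_q/f_p$ and $t:=x^2$; then $\ell_{p,q}(x)=c_{p,q}\,(1+t/p)^{(p+1)/2}(1+t/q)^{-(q+1)/2}$ with $c_{p,q}\in(0,\infty)$, and the $t$-derivative of $\log\ell_{p,q}$ equals
\[
\frac{p+1}{2(p+t)}-\frac{q+1}{2(q+t)}=\frac{(q-p)(1-t)}{2(p+t)(q+t)}
\]
(with the obvious modification $\frac{1-t}{2(p+t)}$ when $q=\infty$), which is $>0$ for $t\in[0,1)$ and $<0$ for $t\in(1,\infty)$. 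Hence $\ell_{p,q}$ is strictly increasing in $x$ on $[0,1]$ and strictly decreasing in $x$ on $[1,\infty)$; that is, the Student family restricted to either half-line $[0,1]$ or $[1,\infty)$ does enjoy the MLR property.

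Next I would record the tool: if $P$ is a probability measure on an interval $J$ and $u,v\colon J\to\R$ are bounded Borel functions that on $\supp P$ are monotone in opposite senses (one nondecreasing, the other nonincreasing), then the correlation (Chebyshev) inequality gives $\E_P[uv]\le\E_P[u]\,\E_P[v]$, with equality if and only if $u$ or $v$ is $P$-a.s.\ constant.

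The main work is then a uniform reduction. In each part, using the symmetry of $f_p$ and, in (iii)--(iv), the hypothesis $a(0)=0$ (so that $a((1-|T_p|)_+)$, resp.\ $a((|T_p|-1)_+)$, vanishes off the relevant region), the quantity in question equals $\E_{P_p}[\varphi]$, where: $J=[0,1)$ in (i),(iii) and $J=(1,\infty)$ in (ii),(iv); the probability measure $P_p$ on $J$ has density proportional to $w(x)f_p(x)$ with $w\equiv1$ in (i),(ii), $w(x)=a(1-x)$ in (iii), $w(x)=a(x-1)$ in (iv); and $\varphi(x)$ is $b(1-x)$, $b(x-1)$, $r(1-x)$, $r(x-1)$ in (i)--(iv) respectively. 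In each case $\varphi$ is bounded (using the boundedness hypotheses on $b$, resp.\ on $r$ and $a$, together with the fact that a nondecreasing $[0,\infty)$-valued function on $[0,1]$ is automatically bounded, which covers $\varphi$ in (i) and (iii)) and monotone, and the denominator $\int_J w\,f_p$ is finite and strictly positive (positivity in (iii),(iv) needs $\{a\ne0\}$ to have positive Lebesgue measure, which is implicit---otherwise the ratio is undefined, and also $r$ could not fail to be $\la$-constant on $\{a\ne0\}$). For $p<q$ the weight $w$ cancels in the Radon--Nikodym derivative, so $\d P_q/\d P_p=\ell_{p,q}/\E_{P_p}[\ell_{p,q}]$ $P_p$-a.e.\ on $J$, whence
\[
\E_{P_q}[\varphi]=\frac{\E_{P_p}[\varphi\,\ell_{p,q}]}{\E_{P_p}[\ell_{p,q}]}.
\]
By the first paragraph, on $J$ the functions $\varphi$ and $\ell_{p,q}$ are monotone in opposite senses (in (i),(iii): $\varphi$ nonincreasing, $\ell_{p,q}$ increasing; in (ii),(iv): $\varphi$ nondecreasing, $\ell_{p,q}$ decreasing), so the correlation inequality gives $\E_{P_p}[\varphi\,\ell_{p,q}]\le\E_{P_p}[\varphi]\,\E_{P_p}[\ell_{p,q}]$, i.e.\ $\E_{P_q}[\varphi]\le\E_{P_p}[\varphi]$. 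For the strict conclusion it suffices that $\ell_{p,q}$ is strictly monotone on $J$ while $P_p$ charges a subset of $J$ of positive Lebesgue measure, so $\ell_{p,q}$ is not $P_p$-a.s.\ constant and the correlation inequality is strict; hence the quantity is strictly decreasing in $p\in(0,\infty]$. (In (iii),(iv) one could instead invoke that $\varphi$ is not $P_p$-a.s.\ constant, which is precisely what ``$r$ not $\la$-constant on $\{a\ne0\}$'' provides.)

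The genuinely substantive ingredient, the half-line MLR property, is already isolated in \eqref{eq:fq/fp}, so what remains is largely bookkeeping. The two points I expect to need care are: (a) keeping the two opposite monotonicity directions straight---it is tempting, but wrong in (i) and (iii), to argue ``$\E_{P_q}[\varphi]\ge\E_{P_p}[\varphi]$ because $P_q$ is stochastically larger'', since there $\varphi$ is \emph{decreasing}---and (b) matching the precise non-$\la$-constancy hypotheses to the equality case of the correlation inequality. Thus the only real obstacle is organizational: packaging (i)--(iv) so that one correlation-inequality step, applied after the symmetry reduction, settles all four.
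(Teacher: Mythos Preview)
Your approach is essentially the paper's: both rest on the half-line MLR property \eqref{eq:fq/fp} together with a Chebyshev-type correlation inequality, the paper writing the latter in the symmetrized double-integral form (as in the proof of Corollary~\ref{cor:2}) and reducing (i),(ii) to special cases of (iii),(iv), while you package all four parts at once via the change of measure $\d P_q/\d P_p=\ell_{p,q}/\E_{P_p}[\ell_{p,q}]$.

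One point to tighten: in your strictness step you write that ``it suffices that $\ell_{p,q}$ is not $P_p$-a.s.\ constant,'' but by your own statement of the equality case you need \emph{both} $\ell_{p,q}$ and $\varphi$ to be non-constant under $P_p$; the word ``instead'' in the closing parenthetical compounds the slip. The hypotheses do deliver this---in (i),(ii) because $P_p$ is equivalent to Lebesgue measure on $J$ and $b$ is not $\la$-constant, and in (iii),(iv) because ``$r$ not $\la$-constant on $\{a\ne0\}$'' is exactly ``$\varphi$ not $P_p$-a.s.\ constant''---so the fix is just to say so explicitly.
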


Let us also present a related result for the $\chi^2$ distribution, based on the MLR property given in \cite[Theorem~2.9]{T2}. 
For $p\in[1,\infty)$, let $Z_p:=X_p-\sqrt{p-1}$, where $X_p$ is any nonnegative r.v.\ such that $X_p^2$ has the $\chi^2$ distribution with $p$ degrees of freedom; complete this definition by continuity, letting $Z_\infty$ have the centered normal distribution with variance $\frac12$. 

\begin{proposition}\label{prop:2}
\ 
\begin{enumerate}[(i)]
	\item 
Let $b\colon\R\to[0,\infty)$ be a bounded nondecreasing function, which is non-constant on $[0,\infty)$. Then $\E b(Z_p)$ is 
decreasing in $p\in[1,\infty]$. 
	\item 
Let $a\colon\R\to[0,\infty)$ be a bounded Borel function.   
Let $r\colon\R\to[0,\infty)$ be a bounded nondecreasing function, which is not $\la$-constant on the set $\{t\in[0,\infty)\colon 
a(t)\ne0\}$. 
Let then $b:=ra$. 
Then the ratio $\dfrac{\E b(Z_p)}{\E a(Z_p)}$ is decreasing in $p\in[1,\infty]$.    
\end{enumerate}
\end{proposition}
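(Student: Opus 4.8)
The plan is to reduce both parts to a single, general MLR-to-monotonicity lemma and then apply it with the specific MLR property of the $\chi^2$ family. The crucial input is \cite[Theorem~2.9]{T2}, which asserts that the densities of $X_p^2/2$ (or equivalently, after the affine change of variables $x\mapsto\sqrt{p-1}+x$, the densities $g_p$ of the shifted variables $Z_p$) form an MLR family in $p$ on $[1,\infty]$: for $1\le p_0<p_1\le\infty$ the ratio $g_{p_1}/g_{p_0}$ is increasing on the common support. First I would record the following abstract fact: if $(g_\th)_{\th\in I}$ is an MLR family with corresponding tail functions $(\bar G_\th)$, then for any nondecreasing bounded $b$ the map $\th\mapsto\E b(W_\th)$ is nondecreasing (here $W_\th\sim g_\th$), and strictly increasing provided $b$ is not $\la$-constant on an interval where the $g_\th$ put positive mass; this is the classical consequence MLR $\implies$ SM combined with the representation $\E b(W_\th)=b(-\infty)+\int_\R \bar G_\th\,\d b$ (as a Lebesgue--Stieltjes integral after subtracting a constant). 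Since in our parameterization larger $p$ corresponds to the stochastically \emph{smaller} $Z_p$ (the family is MLR in $-p$, analogously to the Student case — note $Z_\infty$ is centered while $Z_p$ for finite $p$ is, in the relevant sense, shifted to have heavier right behavior), the monotonicity comes out decreasing in $p$, which is part~(i).

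For part~(ii), the strategy mirrors the proof of Corollary~\ref{cor:2}(i). Writing $b=ra$ with $r$ nondecreasing, I would express
\begin{equation*}
\frac{\E b(Z_p)}{\E a(Z_p)}=\frac{\E\big(r(Z_p)a(Z_p)\big)}{\E a(Z_p)}=\E\big(r(Z_p)\,\big|\,\text{tilted law}\big),
\end{equation*}
i.e. introduce, for each $p$ with $\E a(Z_p)\in(0,\infty)$, the probability measure $\d\nu_p:=a\,\d\mu_p/\E a(Z_p)$ where $\mu_p$ is the law of $Z_p$. The key observation is that the family $(\nu_p)$ is \emph{again} an MLR family in $p$ (on the set where it is defined), because multiplying all densities by the \emph{same} fixed nonnegative function $a$ preserves monotonicity of likelihood ratios: $\frac{\d\nu_{p_1}}{\d\nu_{p_0}}$ equals $\frac{g_{p_1}}{g_{p_0}}$ times a constant, hence still monotone. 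Then part~(i)'s reasoning applied to the $\nu_p$-family with the nondecreasing function $r$ gives that $\E_{\nu_p} r=\E b(Z_p)/\E a(Z_p)$ is nonincreasing in $p$, and strictly decreasing because $r$ is assumed not $\la$-constant on $\{t\ge0\colon a(t)\ne0\}$, which is exactly the condition ensuring $r$ is non-constant on a set of positive $\nu_p$-mass. The boundedness hypotheses on $a$, $r$, $b$ guarantee all the expectations are finite and the interchanges of integration and the limit $p\to\infty$ are legitimate (dominated convergence, using $g_p\to g_\infty$ pointwise from the continuity extension).

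The main obstacle I anticipate is \emph{not} the structural argument — which is essentially a transcription of the Student-case corollaries — but rather two bookkeeping points. First, one must verify that the shift $Z_p=X_p-\sqrt{p-1}$ is the correct normalization making \cite[Theorem~2.9]{T2}'s MLR statement literally applicable and that the direction of monotonicity (decreasing in $p$) is as claimed; this requires carefully tracking whether the cited MLR is in $p$ or in $-p$ after the shift, and confirming the $p=\infty$ endpoint behaves correctly under the stated continuity extension. Second, in part~(ii) one should check the edge case where $\E a(Z_p)$ or $\E b(Z_p)$ degenerates at an endpoint of $[1,\infty]$; since $a,b$ are bounded these are automatically finite, but one still needs $\E a(Z_p)>0$, which follows from $a\ge0$ being nonzero together with the fact that the support of $Z_p$'s law has nonempty interior intersecting $\{a\ne0\}$ — a point worth stating explicitly, just as the analogous positivity remark was made after Corollary~\ref{cor:2}.
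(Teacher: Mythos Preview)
Your proposal is correct and matches the paper's own (very terse) argument, which simply says to transcribe the proofs of Corollary~\ref{cor:1}(i) and Corollary~\ref{cor:2}(i) with the MLR input from \cite[Theorem~2.9]{T2} in place of Theorem~\ref{th:}'s MTR property. Your tilting reformulation for part~(ii) --- passing to the $a$-weighted laws $\nu_p$ and noting that MLR survives multiplication by a common nonnegative factor --- is an equivalent repackaging of the Chebyshev double-integral identity the paper points to, and the bookkeeping concerns you flag (direction of the monotonicity after the shift, positivity of $\E a(Z_p)$, and the $p=\infty$ endpoint) are exactly the loose ends the paper also leaves to the reader.
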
 

Note that the generalized moment $\E a\big((1-|T_p|)_+\big)$ in the denominator of the decreasing ratio in part (iii) of Proposition~\ref{prop:1} 
is (strictly) positive, because the function $a$ is nonnegative on $[0,1]$ and the function $r$ is not $\la$-constant on the subset of $[0,1]$ where $a\ne0$, which implies that the latter subset is of nonzero Lebesgue measure; on the other hand, the probability that the r.v.\ $(1-|T_p|)_+$ takes a value in any such  subset is nonzero. 
For similar reasons, the generalized moments in the denominators of the decreasing ratios in part (iv) of Proposition~\ref{prop:1} and in part (ii) of Proposition~\ref{prop:2} 
are (strictly) positive as well. 

As for the boundedness conditions in Propositions~\ref{prop:1} and \ref{prop:2}, they are used as a simplest way to ensure that the $a$- and $b$-moments in these propositions are finite. 
Note the following: 

\begin{enumerate}
	\item In part (i) of Proposition~\ref{prop:1}, the function $b$ will be automatically bounded, since it is nondecreasing and defined on a compact interval; similarly for the function $r$ in part (iii) of Proposition~\ref{prop:1}. This is in contrast with the other parts of Proposition~\ref{prop:1}, as well as with Proposition~\ref{prop:2}. 
	\item Part (ii) of Proposition~\ref{prop:1} and part (i) of Proposition~\ref{prop:1}, where the function $b$ is assumed to be nondecreasing, can be replaced by somewhat more complicated statements similar to part (i) of Corollary~\ref{cor:1}. 
	\item The boundedness condition in the other parts of Propositions~\ref{prop:1} and \ref{prop:2} can be relaxed as well, but then the statements and proofs will be more complicated, and in some cases the strictness of the decrease may be lost. 
\end{enumerate}

Theorem~\ref{th:} was motivated by the study in \cite{closeness-student} of closeness of the members of the Student family of distributions to one another and, in particular, to the standard normal distribution. 
Indeed, the results of the present note --- Theorem~\ref{th:} and Lemma~\ref{lem:} --- are used in the proof of the main results in \cite{closeness-student}. 
Corollaries~\ref{cor:1}  and \ref{cor:2} were motivated by work on the paper \cite{nonlinear} and are used there.  

\section{Proofs}\label{proofs} 

The proof of Theorem~\ref{th:} is based in part on 

\begin{lemma}\label{lem:}
For all $p\in(0,\infty)$ one has $\fd{}p f_p(0)>0$. 
\end{lemma}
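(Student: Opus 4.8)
My plan is to reduce the statement to a well-known inequality for the digamma function $\psi:=\Gamma'/\Gamma$ and then settle that inequality by a one-line series comparison. First I would observe that, by \eqref{eq:f_p},
\[
f_p(0)=\frac{\Gamma\!\left(\tfrac{p+1}2\right)}{\sqrt{\pi p}\;\Gamma\!\left(\tfrac p2\right)}>0 ,
\]
so it is equivalent to show that $p\mapsto\ln f_p(0)$ is strictly increasing on $(0,\infty)$. Since
\[
\ln f_p(0)=\ln\Gamma\!\left(\tfrac{p+1}2\right)-\ln\Gamma\!\left(\tfrac p2\right)-\tfrac12\ln\pi-\tfrac12\ln p ,
\]
differentiating in $p$ (unproblematic, as $\ln\Gamma$ is smooth on $(0,\infty)$) gives
\[
\fd{}p\ln f_p(0)=\frac12\left[\psi\!\left(\tfrac{p+1}2\right)-\psi\!\left(\tfrac p2\right)-\tfrac1p\right].
\]
Thus, writing $x:=p/2\in(0,\infty)$, the lemma reduces to the inequality $\psi\!\left(x+\tfrac12\right)-\psi(x)>\tfrac1{2x}$ for all $x>0$.

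To prove this I would use the classical series $\psi(y)=-\gamma+\sum_{n=0}^\infty\bigl(\tfrac1{n+1}-\tfrac1{n+y}\bigr)$ (with $\gamma$ the Euler constant), which yields
\[
\psi\!\left(x+\tfrac12\right)-\psi(x)=\sum_{n=0}^\infty\left(\frac1{n+x}-\frac1{n+x+\frac12}\right)=\sum_{n=0}^\infty\frac{1/2}{(n+x)\bigl(n+x+\tfrac12\bigr)} .
\]
Since $n+x+\tfrac12<n+x+1$ for every $n\ge0$, each term here strictly exceeds $\dfrac{1/2}{(n+x)(n+x+1)}$, and the latter series telescopes:
\[
\sum_{n=0}^\infty\frac{1/2}{(n+x)(n+x+1)}=\frac12\sum_{n=0}^\infty\left(\frac1{n+x}-\frac1{n+x+1}\right)=\frac1{2x} .
\]
Hence $\psi\!\left(x+\tfrac12\right)-\psi(x)>\tfrac1{2x}$, so $\fd{}p\ln f_p(0)>0$ and therefore $\fd{}p f_p(0)=f_p(0)\,\fd{}p\ln f_p(0)>0$, as required.

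I do not anticipate a genuine obstacle; the argument is short once one passes to $\ln f_p(0)$ and recognizes the digamma difference. The single point deserving a word of care is the termwise comparison of the two convergent series of positive terms — equivalently, enlarging the denominator factor $n+x+\tfrac12$ to $n+x+1$ while keeping the numerator $\tfrac12$ fixed. Should one prefer to avoid series, the same inequality follows from Gauss's integral $\psi(y)=\int_0^\infty\!\bigl(\tfrac{e^{-t}}t-\tfrac{e^{-yt}}{1-e^{-t}}\bigr)\dd t$: using the factorization $1-e^{-t}=(1-e^{-t/2})(1+e^{-t/2})$ one gets $\psi(x+\tfrac12)-\psi(x)=\int_0^\infty\frac{e^{-xt}}{1+e^{-t/2}}\dd t$, which exceeds $\int_0^\infty\tfrac12 e^{-xt}\dd t=\tfrac1{2x}$ because $0<e^{-t/2}<1$ for $t>0$.
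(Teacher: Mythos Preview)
Your argument is correct. Both you and the paper begin identically, reducing the claim to the digamma inequality $\psi\!\left(\tfrac{p+1}{2}\right)-\psi\!\left(\tfrac{p}{2}\right)>\tfrac1p$, but then diverge. You settle it via the partial-fraction series for $\psi$ and a termwise comparison with a telescoping sum; the paper instead applies the Gauss integral representation of $\psi$, substitutes $t=e^{-z/2}$, and integrates by parts to obtain the exact identity $2p\,\fd{}p\ln f_p(0)=\int_0^1\frac{2t^p}{(1+t)^2}\dd t$, which is manifestly positive. Your series route is arguably more elementary and needs no calculus beyond the well-known expansion of $\psi$; the paper's route, on the other hand, yields a closed-form expression for the derivative rather than a mere lower bound, which can be handy if one later wants quantitative estimates. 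Your alternative remark at the end --- factoring $1-e^{-t}=(1-e^{-t/2})(1+e^{-t/2})$ inside the Gauss integral --- is in fact a cleaner version of the paper's own computation, reaching positivity without the change of variable and integration by parts.
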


\begin{proof}
Take indeed any $p\in(0,\infty)$. 
Using the Gauss integral formula for $\psi:=\Ga'/\Ga$ (see e.g.\ \cite[Theorem~1.6.1]{andrews}), then changing the integration variable to $t:=e^{-z/2}$ and finally integrating by parts, one has 
\begin{align*}
	2p\,\fd{\ln f_p(0)}p
	&=-p \psi\Big(\frac{p}{2}\Big)+p \psi\Big(\frac{p+1}{2}\Big)-1 \\
	&=p\int_0^\infty \frac{e^{-pz/2}-e^{-(p+1)z/2}}{1-e^{-z}}\dd z -1 \\ 
	&=\int_0^1\frac{2pt^{p-1}\dd t}{1+t} - 1
	=\int_0^1\frac{2t^p\dd t}{(1+t)^2}>0. 
\end{align*}
So, $\fd{}p\ln f_p(0)>0$ and hence $\fd{}p f_p(0)>0$, which proves the lemma. 
\end{proof}


\begin{proof}[Proof of Theorem~\ref{th:}]
Take any positive $p$ and introduce
\begin{gather*}
	r(x):=r_p(x):=\pd{\ln G_p(x)}p=\frac{f(x)}{g(x)},\quad\text{where}\\
	f(x):=\pd{G_p(x)}p\quad\text{and}\quad g(x):=G_p(x),  
\end{gather*}
for all real $x$. 
Then 
\begin{equation*}
	\rho(x):=\frac{f'(x)}{g'(x)}=\frac{\frac{\partial^2}{\partial p\,\partial x}G_p(x)}{\pd{}x G_p(x)}
	=\frac{\pd{}p f_p(x)}{f_p(x)}
	=\pd{\ln f_p(x)}p. 
\end{equation*}
So, 
\begin{equation}\label{eq:rho'}
		\rho'(x)
	=\frac{\partial^2 \ln f_p(x)}{\partial p\,\partial x}
		=\frac{x(1-x^2)}{(p+x^2)^2}. 
\end{equation}
It follows that $\rho$ is increasing on $(0,1]$ and decreasing on $[1,\infty)$. 
Note that $r'(0)=-2\fd{f_p(0)}p$. 
So, by Lemma~\ref{lem:}, $r'(0)<0$. 
Using now \cite[Proposition 4.3]{pin06}, one sees that $r$ is decreasing on $(0,\infty)$; that is, 
$\pd{\ln G_p(x)}p$ is decreasing in $x>0$. 
Therefore, for any $p$ and $q$ such that $0<p<q\le\infty$ 
\begin{equation}\label{eq:Gq/Gp}
	\ln\frac{G_q(x)}{G_p(x)}=\ln G_q(x) - \ln G_p(x)=\int_p^q \pd{\ln G_s(x)}s \dd s 
\end{equation}
is decreasing in $x>0$; so, the statement \eqref{eq:decr} holds, which in turn implies 
$\frac{G_q(x)}{G_p(x)}<\frac{G_q(0)}{G_p(0)}=1$ for $x>0$ and hence \eqref{eq:less}. 
\end{proof}

One may also note that \eqref{eq:rho'} implies (cf.\ \eqref{eq:Gq/Gp}) that 
\begin{equation}\label{eq:fq/fp}
	\text{$\frac{f_q(x)}{f_p(x)}$ is increasing in $x\in[0,1]$ and decreasing in $x\in[1,\infty)$} 
\end{equation}
--- again for any $p$ and $q$ such that $0<p<q\le\infty$.

\begin{proof}[Proof of Corollary~\ref{cor:1}] \  
Using integration by parts or, more precisely, the Fubini theorem, one has 
\begin{align}
	\frac12\,\E b(|T_p|)&=\int_0^\infty f_p(x)b(x)\d x 
	=\int_{[0,\infty)}G_p(x)\mu_b(\d x)  \label{eq:fubini}
\end{align}
for all $p\in(0,\infty]$, 
where, as before, $\mu_b$ is the Lebesgue--Stieltjes measure corresponding to the function $b$. 
So, by \eqref{eq:less}, $\E b(|T_p|)$ is indeed nondecreasing in $p\in(0,\infty]$, which indeed implies that 
$F_b$ is of the form $[p_b,\infty)$ or $(p_b,\infty)$. 
Moreover, since $b$ was assumed to be non-constant on $(0,\infty)$, one has $\mu_b\big((0,\infty)\big)>0$, which, again by \eqref{eq:less}, implies that $\E b(|T_p|)$ is decreasing in $p\in F_b$. In the case when $F_b=[p_b,\infty)$, this completes the proof of part (i) of Corollary~\ref{cor:1}. 
The same conclusion obtains in the case when $F_b=(p_b,\infty)$, because then $\E b(|T_{p_b}|)=\infty>\E b(|T_p|)$ for all $p\in(p_b,\infty)$. 
%
%
%

As for part (ii) of Corollary~\ref{cor:1}, it is indeed a special case of part (i). 
\end{proof}

We shall need the following folklorish 

\begin{lemma}\label{lem:mu-const}
If $h$ is not $\mu$-constant on $E$, then there exists some $\ell\in\R$ such that $\mu\big(\{x\in E\colon h(x)<\ell\}\big)>0$ and $\mu\big(\{x\in E\colon h(x)>\ell\}\big)>0$. 
\end{lemma}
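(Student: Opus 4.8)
The plan is to prove the contrapositive, or rather to argue directly by a covering/union argument. Suppose, toward a contradiction, that no such $\ell$ exists; then for every $\ell\in\R$ at least one of the sets $A_\ell:=\{x\in E\colon h(x)<\ell\}$ and $B_\ell:=\{x\in E\colon h(x)>\ell\}$ is $\mu$-null. The intuition is that if, say, $A_\ell$ is null then $h\ge\ell$ a.e.\ on $E$, so the ``essential range'' of $h$ lies in $[\ell,\infty)$; and if $B_\ell$ is null then $h\le\ell$ a.e., so the essential range lies in $(-\infty,\ell]$. These two behaviors must fit together monotonically, pinning $h$ down to a single value a.e.

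The key steps, in order, are as follows. First I would record the obvious monotonicity: $A_\ell$ increases and $B_\ell$ decreases as $\ell$ increases, and $A_\ell\uparrow E$ as $\ell\to\infty$ while $B_\ell\uparrow E$ as $\ell\to-\infty$ (using that $h$ is real-valued, hence finite, on $E$). Next, define $L:=\sup\{\ell\in\R\colon \mu(A_\ell)=0\}$ and $M:=\inf\{\ell\in\R\colon \mu(B_\ell)=0\}$, with the usual conventions $\sup\emptyset=-\infty$, $\inf\emptyset=+\infty$. From the assumption that one of $A_\ell,B_\ell$ is always null, every real $\ell$ lies in at least one of the two index sets, so $L\ge M$ (if $\ell<\ell'$ with $\mu(B_\ell)=0$ and $\mu(A_{\ell'})=0$ were both possible for $\ell'$ arbitrarily below... — more carefully: for any $\ell$, either $\ell$ is in the first index set, giving $\ell\le L$, or in the second, giving $\ell\ge M$; a short argument then gives $M\le L$). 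Then I would show $\mu(A_L)=0$ and $\mu(B_M)=0$: since $A_L=\bigcup_n A_{\ell_n}$ for a sequence $\ell_n\uparrow L$ with each $\mu(A_{\ell_n})=0$ (continuity from below / countable subadditivity of $\mu$), we get $\mu(A_L)=0$, and symmetrically $\mu(B_M)=0$. Combining with $M\le L$ gives $E\setminus(A_L\cup B_M)\supseteq\{x\in E\colon M\le h(x)\le L\}$... — actually the clean way: $A_M\subseteq A_L$ so $\mu(A_M)=0$, hence $\mu(\{x\in E: h(x)<M\})=0$ and $\mu(\{x\in E: h(x)>M\})=\mu(B_M)=0$, so $h=M$ $\mu$-a.e.\ on $E$. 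But that says $h$ is $\mu$-constant on $E$ with $\ell=M$, contradicting the hypothesis. (If $M=+\infty$ or $L=-\infty$ one argues separately: e.g.\ $M=+\infty$ forces $\mu(B_\ell)>0$ for all $\ell$, so every $\ell$ is in the first index set, $L=+\infty$, $\mu(A_\ell)=0$ for all $\ell$, and $E=\bigcup_n A_n$ is $\mu$-null, whence $h$ is vacuously $\mu$-constant on $E$; similarly for $L=-\infty$.)

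The main obstacle — really the only subtlety — is bookkeeping the boundary cases and the degenerate possibilities $M\in\{\pm\infty\}$, $L\in\{\pm\infty\}$, together with making sure the countable unions used to pass to $A_L$, $B_M$ are legitimate (choosing rational or explicit monotone sequences of $\ell$'s so that only countable subadditivity of $\mu$ is invoked). Everything else is a routine manipulation of the monotone families $(A_\ell)$ and $(B_\ell)$ and the definition of $\mu$-constancy. I would therefore organize the write-up around the single quantity $M$ (or $L$), showing $h=M$ $\mu$-a.e.\ on $E$ unless $E$ itself is $\mu$-null, and in either case concluding that $h$ is $\mu$-constant on $E$, the desired contradiction.
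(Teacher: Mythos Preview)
Your proposal is correct and takes essentially the same approach as the paper: a contrapositive argument exploiting the monotonicity of the families $\{h<\ell\}$ and $\{h>\ell\}$ together with countable subadditivity to pin $h$ down to a single value $\mu$-a.e.\ on $E$. The paper packages this via the nondecreasing function $H(\ell):=\mu(\{x\in E:h(x)\le\ell\})$ and locates a jump point $\ell_*$ (then uses right-continuity of $H$ to pass to strict inequalities), whereas your $L,M$ formulation works directly and avoids that last adjustment; but the content is the same.
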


\begin{proof}
For any $\ell\in\R$, let $H(\ell):=\mu\big(\{x\in E\colon h(x)\le \ell\}\big)$, which is nondecreasing in $\ell$, with $H(-\infty+)=0$ and $H(\infty-)=\mu(E)$. Note also that $\mu(E)>0$, since $h$ is not $\mu$-constant on $E$. 
So, each of the sets $A_1:=\{\ell\in\R\colon H(\ell)>0\}$ and $A_2:=\{\ell\in\R\colon H(\ell)<\mu(E)\}$ is nonempty. 

To obtain a contradiction, suppose that $A_1\cap A_2=\emptyset$. Then the complementary sets $A_1^c=\{\ell\in\R\colon H(\ell)=0\}$ and $A_2^c=\{\ell\in\R\colon H(\ell)=\mu(E)\}$ form a partition of $\R$ and are each nonempty. Moreover, $\ell_1<\ell_2$ for any $\ell_1\in A_1^c$ and $\ell_2\in A_2^c$, because the function $H$ is nondecreasing. So, there is some $\ell_*\in\R$ such that $(-\infty,\ell_*)\subseteq A_1^c$ and $(\ell_*,\infty)\subseteq A_2^c$.  
Hence, $\mu\big(\{x\in E\colon h(x)<\ell_*\}\big)=H(\ell_*-)=0$ and, similarly, 
$\mu\big(\{x\in E\colon h(x)>\ell_*\}\big)=\mu(E)-H(\ell_*+)=0$; 
thus, $\mu\big(\{x\in E\colon h(x)\ne\ell_*\}\big)=0$, which 
does contradict the condition that $h$ is not $\mu$-constant on $E$. 

So, $A_1\cap A_2\ne\emptyset$; that is, $0<H(k)<\mu(E)$ for some $k\in\R$. Since $H$ is nondecreasing and right-continuous, one has $0<H(k)\le H(\ell-)\le H(\ell)<\mu(E)$ for some $\ell\in(k,\infty)$.  
Therefore, $\mu\big(\{x\in E\colon h(x)<\ell\}\big)=H(\ell-)>0$ and $\mu\big(\{x\in E\colon h(x)>\ell\}\big)=\mu(E)-H(\ell)>0$.  
\end{proof}

\begin{proof}[Proof of Corollary~\ref{cor:2}] \  
The idea of this proof is quite standard and goes back to the Chebyshev inequality that may be stated as the nonnegativity of the correlation between two increasing functions of the same r.v. 
We supply details for the readers' convenience and because of the particular concern about the strict monotonicity in the specific situations considered here. 

Take any $p$ and $q$ in $F_a\cap F_b$ such that $p<q$. 
Using \eqref{eq:fubini} and $\d\mu_b=\rho\,\d\mu_a$, one can check the identity 
\begin{multline*}
	\Big(\dfrac{\E b(|T_q|)}{\E a(|T_q|)}-\dfrac{\E b(|T_p|)}{\E a(|T_p|)}\Big)
	\E a(|T_p|)\E a(|T_q|) \\ 
	=
	2\,\int_0^\infty\int_0^\infty[\rho(v)-\rho(u)]\Big[\frac{G_q(v)}{G_p(v)}-\frac{G_q(u)}{G_p(u)}\Big]	
	G_p(u)G_p(v)\mu_a(\d u)\mu_a(\d v), 
\end{multline*}
say by expanding the product of the expressions in the brackets on the right-hand side. 
Now part (i) of Corollary~\ref{cor:2} follows if one refers to \eqref{eq:decr} and also to Lemma~\ref{lem:mu-const};  
and
part (ii) is a special case of part (i). 
\end{proof}

\begin{proof}[Proof of Proposition~\ref{prop:1}] \ 
The proof of part (iii) of Proposition~\ref{prop:1} is quite similar to that of part (i) of Corollary~\ref{cor:2}. 
Use here the identity $\frac12\,\E b\big((1-|T_p|)_+\big)=\int_0^1f_p(x)\,b(1-x)\d x$ (instead of \eqref{eq:fubini}), and similarly with $a$ in place of $b$. 
Thus, use $f_p(x)$, $a(1-x)\d x$, $b(1-x)\d x=r(1-x)a(1-x)\d x$, and the ``increasing'' part of \eqref{eq:fq/fp}  
instead of 
$G_p(x)$, $\mu_a(\d x)$, $\mu_b(\d x)=\rho(x)\mu_a(\d x)$ 
and \eqref{eq:decr}, respectively.  

The proof of part (iv) of Proposition~\ref{prop:1} is quite similar to the one given just above for part (iii); of course, here one will use the ``decreasing'' part of \eqref{eq:fq/fp}. 

As for part (i) of Proposition~\ref{prop:1}, there without loss of generality $b(0)=0$. Then part (i) becomes a special case of part (iii), with $a(x)=\ii{x\in(0,1]}$ and $r(x)=b(x)$ for all $x\in[0,1]$. 
Similarly, part (ii) of Proposition~\ref{prop:1} reduces to a special case of part (iv). 
\end{proof} 

\begin{proof}[Proof of Proposition~\ref{prop:2}] \  
The proofs of parts (i) and (ii) of Proposition~\ref{prop:2} are quite similar to those of part (i) of Corollary~\ref{cor:1} and part (i) of Corollary~\ref{cor:2}, respectively. 
Here one should also slightly modify the statement and proof of the mentioned Theorem~2 in \cite{T2} in order to prove the strict decrease. 
\end{proof}

\bibliographystyle{abbrv}


\bibliography{C:/Users/Iosif/Dropbox/mtu/bib_files/citations}

\end{document}